\newtheorem{Proposition}{Proposition}
\newtheorem{Lemma}{Lemma}
\def\exp{\mbox{{\rm exp}}}
\def\#{{{\cal D}_h}}
\def\va{\raise 2pt\hbox{,}}
\title{\bf An asymptotic preserving scheme for kinetic models
    for chemotaxis phenomena}
\author{\textbf{A. Bellouquid}$^a$ and \textbf{J. Tagoudjeu}$^b$ \\
        $^a$ Cadi Ayyad University, ENSA Marrakech, Morocco\\
        a.bellouquid@uca.ma \\
        $^b$ University of Yaound\'{e} I, ENSP and CETIC, Cameroon\\
        jtagoudjeu@gmail.com, jacques.tagoudjeu@polytechnique.cm}
\date{}
\begin{document}

\maketitle

\begin{abstract} In this paper, we propose a numerical scheme to solve the
kinetic model for chemotaxis phenomena. Formally, this scheme is
shown to be uniformly stable with respect to the small parameter,
consistent with the fluid-diffusion limit (Keller-Segel model).
Our approach is based on the micro-macro decomposition which leads
to an equivalent formulation of the kinetic model that couples a
kinetic equation with macroscopic ones. This method is validated
with various test cases and compared to other standard methods.
\end{abstract}
\textbf{keyword}:
Asymptotic preserving scheme; Kinetic theory; Micro-macro
decomposition; Chemotaxis phenomena\\
\textbf{MSC}: 65M06, 35Q20, 82C22, 92B0

%\end{frontmatter}

\section{Introduction}

Chemotaxis is a process by which cells change their movement by reacting to the presence of a chemical substance. Cells approach to chemically favorable environments and avoid unfavorable ones. In a simple description, where we only consider cells and a chemical substance (the chemo-attractant), a model for the space and time evolution of the density $n = n(t, x)$ of cells and the chemical concentration $S = S(t, x)$ at time $t$ and position $x$  has been introduced by Patlak \cite{Pat} and Keller-Segel \cite{Kel} reads:
\begin{equation}\label{ma} \left\{
\begin{array}{l}
\partial_t n + \nabla_{x} \cdot \left( n\, \chi (S)  \nabla_{x} {S}  -D_n  \nabla_x n \right)=0,  \\
\partial_t S -D_S\Delta S = H(n,S),
\end{array} \right.
\end{equation}
where $\nabla_x$ denotes the gradient with respect to the spatial variable, while the positive defined constants $ D_S$  and $D_n$ are the diffusivity of the chemo-attractant and of the cells, respectively, and $\chi$ is the chemotactic sensitivity.

In general the substance $S$ does not only diffuse in the substrate, but it can also be produced by the bacteria themselves. The role of t $H(n,S)$ consists in modeling the interaction between both quantities. A typical example is  $H(n,S)= a n -b S$, which describes the production of the chemo-attractant by the bacteria at a constant rate $a$ as well as chemical decay with relaxation time $\frac{1}{b}$. Since the bacterial movement is directed toward the higher concentrations of $S$, the coupling is attractive. A deep insight into the phenomenological derivation of Keller and Segel types models is given in the survey \cite{[HP1]}.

The qualitative analysis of Keller-Segel models has attracted several mathematicians and a variety of interesting results have been produced. The surveys \cite{[H1]} and \cite{[BB5]} provide a detailed review and critical analysis of the qualitative properties of the solutions to problems related to the application to various biological contexts.

An alternative modeling approach has been introduced by the mesoscopic description which bridges the interaction of stochastic  particle  to macroscopic equations. This middle ground consists in describing the movement of cells by a ``run \& tumble'' process \cite{Hot1,Hot2}. Cells move along a straight line in the running phase and make reorientation as a reaction to the surrounding chemicals during the tumbling phase. This is the typical behavior that has been observed in experiments. The resulting kinetic equation, with parabolic scaling, reads
\begin{equation}\label{ki}
\left\{
\begin{array}{l}
    \varepsilon \partial_t f + v\cdot \nabla_x f = \frac{1}{\varepsilon}\mathcal{T}(S,f),\\
   \partial_t S -D_S\Delta S = H(n,S),\\
 f(0,x,v)=f_0(x,v), \end{array} \right.
\end{equation}
where $f(t,x,v)$ denotes the density of cells, depending on time $t$, position $x\in \Omega \subset \mathbb{R}^d$ and velocity $v \in V \subset \mathbb{R}^d$. $\mathcal{T}$ is an operator, which models the change of direction of cells and $\varepsilon$ is a time scale which here refers to the turning frequency. The function $S(t,x)$ is the chemical concentration, where $n$ denotes the density of cells, and is given by
$$ \displaystyle{n(t,x) = \int_V f(t,x,v) dv}.$$

%%%%%%%%%%%%%%%%%%%%%%%%%%%%%%%%%%%%%%%%%%%%%%%%%%%%%%%%%%%%%%%%%%%%%%%%%%%%%%%%%%%%%%%%

Starting with the kinetic equation (\ref{ki}), one can (at least formally) derive the macroscopic limit  (\ref{ma}) as $\varepsilon \rightarrow 0$.  Various asymptotic limits, including hyperbolic limits,  have been investigated in~\cite{BB1,BB3,BDE,Perth05,HIC}.

The aim of this paper is the development of numerical schemes to solve the kinetic equation by methods that are uniformly stable along the transition from kinetic regime to the fluid regime. The main difficulty is due to the term $\frac{1}{\varepsilon}$ which becomes stiff when $\varepsilon$ is close to zero (macroscopic regime). In this case, solving the kinetic equation by a standard explicit numerical scheme requires the use of a time step of the order of $\varepsilon$, which leads to very expensive numerical computations for small $\varepsilon$. To avoid this difficulty, it is necessary to use an implicit or semi-implicit time
discretization for the collision part. In fact, such numerical schemes should also have a correct asymptotic behavior, namely for small parameter $\varepsilon$, the schemes should degenerate into a good approximation of the  asymptotics (Keller-Segel model) of the kinetic equation. This property is often called ``asymptotic preserving'', and has been introduced in \cite{J2} for numerical schemes that are stable with respect to a small parameter $\varepsilon$ and degenerate into a consistent numerical scheme for the limit model when $\varepsilon \rightarrow 0$.

Considering that this paper deals with  asymptotic preserving scheme (AP), one also has to mention that  there are different approaches to construct such schemes for kinetic models in various contexts. We mention for instance approaches based on domain decompositions, separating the macroscopic (fluid) domain from the microscopic (kinetic) one (see \cite{CR1, D1}). There are other kind of (AP) schemes for kinetic equations, which are based on the use of time relaxed techniques where the Boltzmann collision operator is discretized by a spectral or a Monte-Carlo method (see~\cite{P1,PR1,PR}). Various techniques have also been developed to design multiscale numerical methods which are based on splitting strategy \cite{Carr11, COR, JP, JRT,Kl1}, penalization procedure \cite{FJ,FJ1,JB,BS} or micro-macro decomposition which first was used by Liu and Yu for theoretical study of the fluid limit of the Boltzmann equation \cite{Li}. It
was then used to develop an AP scheme for different asymptotics (diffusion, fluid, high-field, ...), see~\cite{Lem,Benn09,CG,CCL,L6,Ji,BLM,LMH}.

In this paper, we extend a method of micro-macro decomposition in order to construct asymptotic preserving schemes (AP) for kinetic equations describing chemotaxis phenomena. Our strategy  consists in rewriting the kinetic equation as a coupled system of kinetic part and macroscopic one, by using the micro-macro decomposition of the distribution function. Indeed, this function is decomposed into its corresponding  equilibrium distribution plus the deviation. By using a classical  projection technique, we obtain an evolution equation for the macroscopic parameters of the equilibrium coupled to a kinetic equation for the non-equilibrium part. Although our approach is rather general to apply to a very large class of collision operators, the numerical tests shown in our  work were obtained with very simple model. The outline of the contents are the following. In Section~2, we present the kinetic model  and its properties. The micro-macro decomposition, the corresponding formulation of the kinetic equation, and the  macroscopic limit are presented in Section~3. Our numerical scheme is presented in Section~4. Finally, a numerical test is presented in Section~5.

\section{The kinetic model}

This section provides a description of the kinetic model in the first equation in (\ref{ki}), where the turning kernel $\mathcal{T}$ defines the probability  density of the random velocity jump of cells from $v'$  to $v$. To derive the Keller- Segel equation (\ref{ma})
as $\varepsilon \rightarrow 0$, one has to incorporate both $O(1)$ and $O(\varepsilon)$ scale into $ \mathcal{T}$.

We suppose,  as in \cite{BB1,Perth05,HO1,HO2}, the following perturbation of the turning operator:
\begin{equation}\label{eq3}
    \mathcal{T}(S,f) = \mathcal{T}_0(f)+\varepsilon \mathcal{T}_1(S)(f),
\end{equation}
where $\mathcal{T}_0$, supposed independent of $S$,  represents the dominant part of the turning kernel
modeling the tumble process in the absence of chemical substance, while $\mathcal{T}_1(S)$ defines the perturbation due to chemical cues.

Let us now state the assumptions on the turning operators $\mathcal{T}_0$ and $\mathcal{T}_1(S)$ which are necessary to develop the perturbation approach:

    \vskip.1cm \noindent $\bullet$ The operators $\mathcal{T}_0$ and $\mathcal{T}_1$ preserve the local mass:
        \begin{equation}\label{co}
            \int_V \mathcal{T}_0(f) dv = \int_V \mathcal{T}_1(S,f)dv = 0,\quad \text{for any} \quad S\geq 0.
        \end{equation}

\vskip.1cm \noindent $\bullet$ There exists a bounded velocity distribution $M(v)>0$, independent of $x$ and $t$, such that the flow produced by the equilibrium distribution $M$ vanishes, and $M$ is normalized:
        \begin{equation}\label{M}
            \int_{V} vM(v)dv =0, \qquad \int_{V} M(v)dv =1.
        \end{equation}

\vskip.1cm \noindent $\bullet$  The detailed balance
        \begin{equation}\label{B}
            T_0(v',v)M(v) = T_0(v,v')M(v')
        \end{equation}
        holds.

\vskip.1cm \noindent $\bullet$ The kernel $T_0(v,v')$  is bounded, and there exists a constant $\sigma >0$ such that
        \begin{equation}\label{T}
            T_0(v,v') \geq \sigma M, \quad \forall (v,v') \in V \times V, \quad x \in \mathbb{R}^d, \quad t>0.
        \end{equation}

The most commonly used assumption on the turning operators $\mathcal{T}_i$ , $i = 0, 1$, is that
they are both linear integral operators with respect to $f$ and read:
 \begin{equation}\label{eq4}
    \mathcal{T}_i(S,f) = \int_V
    (T_i(S,v,v')f(t,x,v')-T_i(S,v',v)f(t,x,v))dv',
 \end{equation}
where the turning kernel $T_1(S, v, v')$ describes the reorientation of cells, i.e. the random velocity changes from $v$
to $v'$ and may depend on the chemo-attractant concentration and its derivatives.

 Technical calculations (see \cite{BB1, CMB}),   namely by integration over $v$,
interchanging $v$ by $v'$, and using (\ref{B}), yields:
 \begin{equation} \label{ee}
 \resizebox{0.92\hsize}{0.5cm}{$\int_{V} \mathcal{T}_0(g) \frac{h(v)}{M(v)}\, dv = \frac{1}{2}
 \int_{V}\int_{V} \Psi[M]\left(\frac{g(v')}{M(v')} -\frac{g(v)}{M(v)}\right)\times \left(\frac{h(v)}{M(v)}-\frac{h(v')}{M(v')}\right)\, dv\,dv',$}
\end{equation}
where
$$
\Psi[M] = \frac{1}{2}\, \big(T_0(v,v') M(v') +T_0(v',v)M(v) \big).
$$

In particular Eq.(\ref{ee})  shows that the operators $\mathcal{T}_0$, is a self-adjoint and the following equality:
\begin{equation} \label{est} - \int_{V} \mathcal{T}_0(h)
\frac{h(v)}{M(v)}\, dv = \frac{1}{2} \int_{V}\int_{V} \Psi [M]
\bigg(\frac{h(v)}{M(v)}- \frac{h(v')}{M(v')} \bigg)^{2}\, dv\,dv'
\geq 0
\end{equation}
holds true.

Moreover, for $\int_{V} h(v)\,dv =0$, Eq.(\ref{est}) and the estimate (\ref{T}) yield:
\begin{equation}  \label{sig} -\int_{V}  \mathcal{T}_0(h)
\frac{h(v)}{M(v)}\, dv \geq   \sigma
\int_{V}\frac{h^2(v)}{M(v)}\,dv,
\end{equation}
which shows that  $\mathcal{T}_0$ is a Fredholm operator in the space $L^2(V, \frac{dv}{M(v)})$. Therefore,  the following result
defines the properties of the operator  $\mathcal{T}_0$:

\begin{Lemma}
\label{LE1} Suppose that  Assumptions (\ref{M})-(\ref{T})  hold. Then, the following properties of the operators $\mathcal{T}_0$ hold true:
\begin{itemize}
\item[i)] The operator $\mathcal{T}_0$ is self-adjoint in the space $\displaystyle{ {L^{2}\left(V ,\frac{dv}{M}\right)}}$.

\item[ii)] For $\displaystyle{ f \in L^{2}\left(V, \frac{dv}{M}\right)}$, the equation $\mathcal{T}_0(g) =f$, has a unique solution $\displaystyle{ g \in L^{2}\left(V, \frac{dv}{M}\right)}$, which satisfies
$  \displaystyle{\int_{V} g(v)\, dv = 0} $ $ \hbox{if and only if} $ $\displaystyle{  \int_{V} f(v)\, dv =0.} $

\item[iii)]  The equation $\mathcal{T}_0(g) =v \,  M(v)$, has a unique solution that we call $\theta(v)$. \item[iv)]  The kernel
of $\mathcal{T}_0$ is $N(\mathcal{T}_0) = vect(M(v))$.
\end{itemize}
\end{Lemma}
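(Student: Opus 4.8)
The plan is to read off all four statements from the variational identities already established, principally the bilinear representation (\ref{ee}), the nonnegativity (\ref{est}), and the coercivity estimate (\ref{sig}). First I would dispose of (i): self-adjointness is immediate from (\ref{ee}). Writing the right-hand side with $A=g/M$, $B=h/M$, and primes for the $v'$-argument, the integrand is $\tfrac12\Psi[M](A'-A)(B-B')$; since $\Psi[M]$ is symmetric in $(v,v')$ and $(A'-A)(B-B')=(B'-B)(A-A')$, the form $\int_V \mathcal{T}_0(g)\frac{h}{M}\,dv$ is invariant under $g\leftrightarrow h$. Hence $\mathcal{T}_0$ is symmetric, and being bounded on $L^2(V,\tfrac{dv}{M})$ (bounded kernel on the bounded set $V$, with $M>0$), it is self-adjoint.

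Next I would treat (iv), which also underlies (ii)--(iii). That $M\in N(\mathcal{T}_0)$ follows by inserting $f=M$ in (\ref{eq4}) and using detailed balance (\ref{B}) to make the integrand vanish pointwise. Conversely, if $\mathcal{T}_0(h)=0$ then (\ref{est}) forces $\int_V\int_V \Psi[M]\big(\tfrac{h}{M}-\tfrac{h'}{M'}\big)^2\,dv\,dv'=0$; as $\Psi[M]>0$ by $M>0$ and (\ref{T}), this makes $h/M$ constant, so $h\in vect(M)$, giving $N(\mathcal{T}_0)=vect(M)$.

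For (ii) I would set up a Fredholm framework on the closed subspace $\mathcal{H}_0=\{h\in L^2(V,\tfrac{dv}{M}):\int_V h\,dv=0\}$, which is exactly the orthogonal complement of $M$ since $\langle h,M\rangle_{L^2(dv/M)}=\int_V h\,dv$. By mass conservation (\ref{co}), $\mathcal{T}_0$ maps $\mathcal{H}_0$ into itself, and on $\mathcal{H}_0$ the estimate (\ref{sig}) reads $\langle -\mathcal{T}_0 h,h\rangle_{L^2(dv/M)}\ge \sigma\,\|h\|_{L^2(dv/M)}^2$, so $-\mathcal{T}_0$ is bounded, self-adjoint, and coercive there. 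By Lax--Milgram applied to the symmetric coercive form $a(u,w)=\langle -\mathcal{T}_0 u,w\rangle_{L^2(dv/M)}$, the map $-\mathcal{T}_0:\mathcal{H}_0\to\mathcal{H}_0$ is an isomorphism. Hence for $f$ with $\int_V f\,dv=0$ there is a unique $g\in\mathcal{H}_0$ with $\mathcal{T}_0(g)=f$; and since self-adjointness with closed range gives $\mathrm{Range}(\mathcal{T}_0)=N(\mathcal{T}_0)^\perp=\mathcal{H}_0$, solvability is equivalent to $\int_V f\,dv=0$, while the normalization $\int_V g\,dv=0$ selects the representative modulo $vect(M)$. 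Statement (iii) is then the special case $f=vM(v)$: the compatibility condition $\int_V vM(v)\,dv=0$ is precisely the first relation in (\ref{M}), so applying (ii) componentwise produces the unique mass-zero solution, which I name $\theta(v)$.

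The hard part will be the invertibility step in (ii), namely upgrading coercivity plus boundedness to a genuine isomorphism on $\mathcal{H}_0$ with closed range. I expect to handle this either by the direct Lax--Milgram argument sketched above, or, if one prefers the classical route, by writing $\mathcal{T}_0 = K - \lambda(v)\,\mathrm{Id}$ with $\lambda(v)=\int_V T_0(v',v)\,dv'$ bounded and $K$ Hilbert--Schmidt (since $T_0$ is bounded on the bounded set $V$), so that the Fredholm alternative applies and its solvability condition is supplied by (\ref{co}) together with self-adjointness.
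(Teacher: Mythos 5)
Your proposal is correct and follows essentially the same route as the paper, which proves the lemma implicitly via the identities preceding it: self-adjointness is read off from the bilinear representation (\ref{ee}), and the solvability statements from the coercivity estimate (\ref{sig}), which the paper invokes as showing $\mathcal{T}_0$ is Fredholm on $L^{2}\left(V,\frac{dv}{M}\right)$. Your write-up merely makes explicit the functional-analytic details the paper leaves to the references \cite{BB1,CMB} --- the Lax--Milgram argument on the mass-zero subspace, the kernel identification from the vanishing quadratic form, and (iii) as the special case $f=vM$ with compatibility supplied by (\ref{M}).
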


%%%%%%%%%%%%%%%%%%%%%%%%%%%%%%%%%%%%%%%%
%%%%%%%%%%%%%%%%%%%%%%%%%%%%%%%%%%%%%%%%
\section{micro-macro decomposition and macroscopic limit}
%%%%%%%%%%%%%%%%%%%%%%%%%%%%%%%%%%%%%%%%
%%%%%%%%%%%%%%%%%%%%%%%%%%%%%%%%%%%%%%%%

\subsection{The micro-macro decomposition}

Let $(f, S)$ be a solution of (\ref{ki}). We decompose $f$ as follows:
$$f=M(v) n + \varepsilon  g,$$
where $n$ is the density of cells. Then $\langle g\rangle = \int_V g dv = 0$, and:
\begin{equation} \label{n1}\resizebox{.9\hsize}{!}{$
    \frac{\partial (M n)}{\partial t}  + \varepsilon \frac{\partial g}{\partial t} +
    \frac{1}{\varepsilon}  v M \cdot \nabla_{x} n + v    \cdot
    \nabla_{x} g   =
    \frac{1}{\varepsilon}\mathcal{T}^1_0(g)
    +
    \frac{1}{\varepsilon}\mathcal{T}_1(S)(M(v)n) +
    \mathcal{T}_1^1(S)(g).$}
\end{equation}

Let us now  use a projection technique to separate the macroscopic and microscopic quantities $n(t,x)$ and $g(t,x,v)$. Moreover, let $P_{M}$, denote
 the orthogonal projection onto $N(\mathcal{T}_0)$. Then
$$
P_{M}(h)= \langle h\rangle M, \quad  \mbox{for any}\quad  h\in L^2(V, \frac{dv}{M(v)}),
$$
so that one has the following:
\begin{Lemma}  \label{LE2} One has the following properties for the projection $P_{M}$:
 $$(I-P_{M})(M n)= P_{M}(g)=0,$$ $$(I-P_{M})(v M \cdot \nabla_{x} n)=v M \cdot \nabla_{x} n,$$
$$(I-P_{M})(\mathcal{T}_1(S)(M(v)n)=\mathcal{T}_1(S)(M(v)n),$$
and
$$(I-P_{M})(\mathcal{T}_1(S)(g))=\mathcal{T}_1(S)(g)$$.
\end{Lemma}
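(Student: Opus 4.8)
The plan is to verify the four identities one at a time by applying the definition $P_M(h)=\langle h\rangle M$ directly and reading off the relevant velocity moment from the standing assumptions. Each assertion has the form $(I-P_M)(\cdot)=0$ or $(I-P_M)(\cdot)=(\cdot)$, so in every case it suffices to compute $P_M$ of the term in question, and this reduces to evaluating a single integral over $V$.

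For the first identity, I would compute $P_M(Mn)=\langle Mn\rangle M$. Since $n$ does not depend on $v$ it factors out of the integral, and the normalization $\int_V M\,dv=1$ from (\ref{M}) gives $\langle Mn\rangle=n$, so that $P_M(Mn)=nM=Mn$ and hence $(I-P_M)(Mn)=0$. The companion identity $P_M(g)=0$ is immediate from $\langle g\rangle=0$, which was imposed in the micro-macro decomposition. For the second identity I need $P_M(vM\cdot\nabla_x n)=0$; pulling the $v$-independent factor $\nabla_x n$ out of the velocity integral, this amounts to $\left(\int_V vM\,dv\right)\cdot\nabla_x n=0$, which holds by the vanishing-flux condition $\int_V vM\,dv=0$ in (\ref{M}). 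Thus the term is left invariant by $I-P_M$.

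The third and fourth identities are handled simultaneously, since both require that $P_M$ annihilate a $\mathcal{T}_1$ term. By the local mass conservation property (\ref{co}), $\int_V \mathcal{T}_1(S,f)\,dv=0$ for every admissible argument $f$; applying this with $f=Mn$ and with $f=g$ yields $\langle \mathcal{T}_1(S)(Mn)\rangle=\langle \mathcal{T}_1(S)(g)\rangle=0$. Consequently $P_M$ maps both terms to zero, which is exactly the statement that $I-P_M$ leaves them unchanged.

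There is no genuine obstacle in this argument: every step follows directly from the definition of $P_M$ together with the moment and conservation hypotheses on $M$ and on $\mathcal{T}_1$. The only point requiring minor care is keeping track of which quantities are independent of $v$, namely $n$ and $\nabla_x n$, so that they may be factored out of the velocity integral before the appropriate moment condition is invoked.
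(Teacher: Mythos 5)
Your proof is correct and follows essentially the same route as the paper: the first two identities from the normalization of $M$ and $\langle g\rangle=0$, the second from the vanishing-flux condition in (\ref{M}), and the last two from the mass-conservation property (\ref{co}) applied to $\mathcal{T}_1$. You merely spell out the moment computations that the paper's proof states tersely, so there is nothing to add or correct.
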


\begin{proof} The first two equalities can be rapidly proved since $P_M(M)=M$, and  $\langle g\rangle=0$, as the flux produced by $M$
is zero then
$$(I-P_M)(v M \cdot \nabla_{x} n)=v M \cdot \nabla_{x} n.$$
 In addition, using (\ref{co}) yields
$P_M(\mathcal{T}_1(S)(h))=0$ for any $h\in L^2$, so that the third and the fourth equality are completed.
\end{proof}

Taking the operator $I-P_M$ into the equation (\ref{n1}) and using Lemma \ref{LE2} yields
\begin{equation}\label{er2}\resizebox{.86\hsize}{!}{$
   \varepsilon \frac{\partial g}{\partial t} + \frac{1}{\varepsilon}  v M \cdot \nabla_{x} n + (I-P_M)(v    \cdot
    \nabla_{x} g ) = \frac{1}{\varepsilon}\mathcal{T}_0(g)+ \frac{1}{\varepsilon}\mathcal{T}_1(S)(M(v)n)
    +\mathcal{T}_1(S)(g).$}
\end{equation}
Integrating (\ref{n1}) over $v$, yields
\begin{equation}\label{er3}
    \frac{\partial n}{\partial t} +   \langle v \cdot \nabla_{x} g \rangle = 0.
\end{equation}
So that the micro-macro formulation finally reads:
\begin{equation}\label{mM}\resizebox{.86\hsize}{!}{$
\left\{
\begin{array}{l}
 \varepsilon \frac{\partial g}{\partial t} + \frac{1}{\varepsilon}  v M \cdot \nabla_{x} n + (I-P_M)(v \cdot
\nabla_{x} g ) = \frac{1}{\varepsilon}\mathcal{T}_0(g)+
\frac{1}{\varepsilon}\mathcal{T}_1(S)(M(v)n)\\ \qquad\qquad\qquad
\;\;\;\;\;\;\;\;\;\;\;\;\;\;\;\;\;\;\;\;\;\;\;\;\;\;\;\;\;\;\;\;\;\;\;\;\;\;\;\;\;\;\;\; +\mathcal{T}_1(S)(g),  \\
%{}\\
 \frac{\partial n}{\partial t} +  \langle v \cdot \nabla_{x} g \rangle = 0,  \\
 {}\\
\frac{\partial S}{\partial t}-D_S\Delta S = H(n,S).
\end{array} \right.$}
\end{equation}
Equations (\ref{mM}) correspond to the micro-macro formulation of the kinetic  equation (\ref{ki}) to be used to design our AP
scheme. The following Proposition shows that this formulation is indeed equivalent to the kinetic equation (\ref{ki}).

\begin{Proposition}\label{Prop1} \
\begin{enumerate}
    \item  Let $(f,S)$ be a solution of (\ref{ki}) with initial data $(f_0,S_0)$. Then $(n,g,S)$, where $n=\langle f\rangle $ and $g= \frac{1}{\varepsilon}(f-M(v) n)$ is a solution to a coupled system (\ref{mM}) with the associated initial data: 
        
        $n(t=0)=n_0 =\langle f_0 \rangle$, $g(t=0)=g_0=\frac{1}{\varepsilon}(f_0-M(v) n_0)$,  and $S(t=0)= S_0.$
    \item Conversely, if $(n, g, S)$ satisfies system (\ref{mM}) with initial data $(n_0, g_0, S_0)$ such that $\langle g_0 \rangle=0$ then $f=M(v) n +\varepsilon g$ is a solution to kinetic model (\ref{ki}) with initial data $f_0= M(v) n_0+\varepsilon g_0$, and we have $ n=\langle f \rangle$, and $\langle g\rangle=0.$ \vskip0,3cm
\end{enumerate}
\end{Proposition}

\begin{proof} The proof of 1) is detailed above. For 2), consider $(n,g, S)$ solution of (\ref{mM}). We set $f= M(v)
n+\varepsilon g$ and we show that $f$ is a solution of kinetic model (\ref{ki}). From (\ref{mM}), one has
$$
   \frac{\partial f}{\partial t} - M(v) \frac{\partial n}{\partial t} + \frac{1}{\varepsilon}  v M \cdot \nabla_{x} n + v\cdot \nabla_x g
   -P_M(v\cdot \nabla_x g)
 = \frac{1}{\varepsilon^2}\mathcal{T}_0(f)+
\frac{1}{\varepsilon}\mathcal{T}_1(S)(f).
$$
Hence
$$
   \frac{\partial f}{\partial t} - M(v) \frac{\partial n}{\partial t} +   \frac{1}{\varepsilon} v\cdot \nabla_x
   f
   -M(v)\langle v\cdot \nabla_x g \rangle
 = \frac{1}{\varepsilon^2}\mathcal{T}_0(f)+
\frac{1}{\varepsilon}\mathcal{T}_1(S)(f).
$$
Therefore using (\ref{er3}), one obtains (\ref{ki}). The property $\langle g \rangle =0$ is obtained by integrating
(\ref{er2}) over $v$, using (\ref{co}) and the property of the initial data. This completes the proof.
\end{proof}

\subsection{ The macroscopic limit}
In this subsection, the formal derivation of the macroscopic model is developed starting from the meso-macro model (\ref{ki}). The
macroscopic model has been derived mathematically in \cite{CMB}. We will see that the formal derivation is really
straightforward starting from (\ref{mM}) (compared to the equivalent formulation of (\ref{ki})), since the micro-macro model
is well suited to deal with the asymptotic model in the diffusion limit. Indeed for small $\varepsilon$, the first equation of
(\ref{mM}) by using (\ref{co}) and (\ref{M}) yields
\begin{equation} \label{IN}
g= \mathcal{T}_0^{-1}(v M \cdot \nabla_{x} n)- \mathcal{T}_0^{-1}(\mathcal{T}_1(S)(M(v)n))+O(\varepsilon).
\end{equation}
\noindent  Inserting (\ref{IN}) into (\ref{er3}) yields the asymptotic model (coupled with the concentration equation for $S$):
\begin{equation}\label{mM2}\resizebox{.9\hsize}{!}{$
\left\{
\begin{array}{l}
    \frac{\partial n}{\partial t} +  \left\langle v. \nabla_x
    (\mathcal{T}_0^{-1}(v.\nabla_x n ))\right\rangle -\left\langle v. \nabla_x
    (\mathcal{T}_0^{-1}(\mathcal{T}_1(S)(M(v)n))\right\rangle =O(\varepsilon),\\
{}  \\ 
   \frac{\partial S}{\partial t}-D_S\Delta S = H(n,S).
\end{array} \right.$}
\end{equation}

Using  iii) of Lemma
 \ref{LE1}, one has as  $\mathcal{T}_0$ is a self adjoint operator in $L^2(\frac{dv}{M(v)})$ the following:
\begin{eqnarray*}\left\langle v. \nabla_x
(\mathcal{T}_0^{-1}(\mathcal{T}_1(S)(M(v)n))\right\rangle
&=&\left\langle \mathcal{T}_0(\theta(v)). \frac{\nabla_x}{M(v)}
(\mathcal{T}_0^{-1}(\mathcal{T}_1(S)(M(v)n))\right\rangle
\nonumber\\  &=& div_x \left\langle
\frac{\theta(v)}{M(v)}n\mathcal{T}_1(S)(M(v))\right \rangle,
\end{eqnarray*}
and consequently the macroscopic model (\ref{mM2}) writes
\begin{equation}\label{mM3}
\left\{
\begin{array}{l}
 \frac{\partial n}{\partial t} + div_{x} \, (n \, \alpha(S)-D_n  \nabla_x n)=O(\varepsilon),  \\
   {}  \\
   \frac{\partial S}{\partial t}-D_S\Delta S = H(n,S),
\end{array} \right.
\end{equation}
where  $ D_n$ and $\alpha(S)$ are given by
\begin{equation}\label{Dn}
 \qquad D_n =- \int_V
v \otimes \theta(v) dv, \quad \alpha(S)= - \int_V  \frac{\theta(v)
}{ M(v)}\mathcal{T}_1(S) (M )(v) dv.
\end{equation}

Our approach appears to be quite general, while the Keller-Segel model can be derived.
Let us  consider  probability kernels such that $T_0(v,v')=\sigma M(v), \quad \sigma >0$.
Consequently, the leading turning operators $\mathcal{T}_0$ become relaxation operators:
\begin{equation}
\mathcal{T}_0(g)= -\sigma \Big(g-\left\langle g\right\rangle
M\Big). \end{equation}
In particular, $\theta$ and  the diffusion tensor $ D_n$ are given by:
\begin{equation}
\theta(v)= -\frac{1}{\sigma}\,   M (v), \quad D_n= \frac{1}{\sigma}\int_V v \otimes v M(v) dv.
\label{DN}\end{equation}
In addition, $\alpha(S)$ is given by:
\begin{equation}
\alpha(S)=  \frac{1}{\sigma} \int_V v \mathcal{T}_1[ S](M(v)) dv.
\label{alpha}
\end{equation}
Then, together with the choice $T_1[S]= K_{S}(v, v')\cdot \nabla_{x} S$,
where $K_{S}(v, v')$ is a vector valued function, yields
$\mathcal{T}_1[S](M)= h(v,S)\cdot \nabla_x S$, where 
$$
h(v,S)=\int_V \Big( K_S(v, v')M(v')  -  K_S( v', v)M(v)\Big)dv'.
$$

Finally,  the function $\alpha(S)$ in \eqref{alpha} is given by $\alpha(S)= \chi(S)\cdot \nabla_x S$,
where the chemotactic sensitivity $\chi(S)$ is given by the matrix
\begin{equation}
\chi(S)=\frac{1}{\sigma} \int_V v\otimes h(v, S) dv. \label{chiS}
\end{equation}

Therefore,  the drift term $div_{x} (n \, \alpha (S))$ that
appears in  the macroscopic case stated by (\ref{mM2}) becomes
$$div_{x} (n \, \alpha (S))= div_{x} \left( n\, \chi (S)  \nabla_{x} {S} \right),$$
which gives a Keller-Segel type model (\ref{ma}):
\begin{equation}\label{K}
\left\{
\begin{array}{l}
    \partial_t n + div_{x} \, \left( n\, \chi (S) \cdot
    \nabla_{x} {S}  -D_n  \nabla_x n \right)=O(\varepsilon),  \\{}\\
   \partial_t S -D_S\Delta S = H(n,S).
\end{array} \right.
\end{equation}

%%%%%%%%%%%%%%%%%%%%%%%%%%%%%%%%%%%%%
%%%%%%%%%%%%%%%%%%%%%%%%%%%%%%%%%%%%%%%%
 \section{Numerical methods}
 %%%%%%%%%%%%%%%%%%%%%%%%%%%%%%%%%%%%%%%%
 %%%%%%%%%%%%%%%%%%%%%%%%%%%%%%%%%%%%%%%%

 Let us now  consider Problem (\ref{ki}), subject to the following initial conditions:
    $f(0,x,v) = f_0(x,v)$ and $ S(0,x) = S_0(x)$.
It has been shown that problem (\ref{ki}) is equivalent to the following micro-macro formulation:
 \begin{equation}\label{eq13a}
    \left\{
    \begin{array}{l}
        \frac{\partial g}{\partial t} + \frac{1}{\varepsilon^2}  v M \cdot \nabla_{x} n + \frac{1}{\varepsilon}(I-P_M)(v \cdot
        \nabla_{x} g ) = \frac{1}{\varepsilon^2}\mathcal{T}_0(g) \\
        {}\\
         \qquad\qquad\qquad + \frac{1}{\varepsilon^2}\mathcal{T}_1(S)(M(v)n)
        + \frac{1}{\varepsilon}\mathcal{T}_1(S)(g),  \\
        {}\\
        \frac{\partial n}{\partial t} +  \langle v \cdot \nabla_{x} g \rangle = 0,  \\
        {}\\
        \frac{\partial S}{\partial t}-D_S\Delta S = H(n,S),
    \end{array} \right.
\end{equation}
 subject to the following initial conditions:
 \begin{equation}\label{eq13b}\resizebox{.86\hsize}{!}{$
    n(t = 0) = n_0 = \langle f_0 \rangle,\;
    g(t = 0) = g_0 = \frac{1}{\varepsilon} (f_0-M(v)n_0),\; S(0,x) =
    S_0(x).$}
 \end{equation}

  The discretization of problem (\ref{eq13a})-(\ref{eq13b}) is carried out for each independent variable (time, velocity and space).

 \subsection{Time discretization}

  The treatment of the time variable of problem (\ref{eq13a})-(\ref{eq13b}) can be done by using varieties of methods such as finite difference and variational methods. Finite-differentiation of the derivative in time is the widely used approach.

 The time interval $[0,T]$ is divided into $N$ times steps as follows:  $t_0=0, \quad t_{k+1}=t_k+\Delta t, \quad 0\leq k < N,$
 where $\Delta t = \frac{T}{N}$ is the time step. The approximation of $n(t,x)$ and $g(t,x,v)$ at the time step $t_k$ are denoted respectively by $n^k\approx n(t_k,x)$ and $g^k\approx g(t_k,x,v)$.  Using an implicit scheme for the stiff term $\frac{1}{\varepsilon^2}\mathcal{T}_0(g)$ and an explicit for the other terms in the first equation in (\ref{eq13a}), one obtains :
 \begin{eqnarray}\nonumber
    \frac{g^{k+1}-g^k}{\Delta t}&=&- \frac{1}{\varepsilon^2}  v M \cdot \nabla_{x} n^k - \frac{1}{\varepsilon}(I-P_M)(v \cdot
        \nabla_{x} g^k ) \\ \label{eq14a}
        &+& \frac{1}{\varepsilon^2}\mathcal{T}_0(g^{k+1})+ \frac{1}{\varepsilon^2}\mathcal{T}_1(S^k)(M(v)n^k) +\frac{1}{\varepsilon}\mathcal{T}_1(S^k)(g^k).
 \end{eqnarray}
 Substituting $g$ by $g^{k+1}$ in the second equation of (\ref{eq13a}) yields
 \begin{equation}\label{eq14b}
        \frac{n^{k+1}-n^k}{\Delta t} +  \langle v \cdot \nabla_{x} g^{k+1} \rangle = 0.
 \end{equation}
 Replacing $n$ in the third equation by $n^{k+1}$ one has:
 \begin{equation}\label{eq14c}
        \frac{S^{k+1}-S^k}{\Delta t}-D_{S^k}\Delta S^{k+1} = H(n^{k+1},S^{k+1}).
 \end{equation}
 %%%%%%%%%%%%%%%%%%%%%%%%%%%%%%%%%%%%%%%%%%%%%%%%%%%%%%%%%%%%%%%%%%%%%%%%%%%%%%%%%%%%%%%%%%%%%%%%%%%%%%%%%%%%%%%%%%
 \begin{Proposition}\label{propN1}
    The time discretization (\ref{eq14a})-(\ref{eq14b}) of  the first and second equation of system (\ref{eq13a}) is consistent
    with the first equation of system (\ref{mM2}) when $\varepsilon \longrightarrow 0$.
 \end{Proposition}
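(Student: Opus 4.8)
The plan is to perform a formal asymptotic analysis of the scheme as $\varepsilon \to 0$ and to identify the resulting limiting scheme with a first-order-in-time discretization of the first equation of (\ref{mM2}). First I would multiply the update (\ref{eq14a}) through by $\varepsilon^2$, so that the stiff terms become $O(1)$ while the discrete time derivative carries a prefactor $\varepsilon^2$:
\begin{equation*}
\varepsilon^2\frac{g^{k+1}-g^k}{\Delta t} = -vM\cdot\nabla_x n^k - \varepsilon(I-P_M)(v\cdot\nabla_x g^k) + \mathcal{T}_0(g^{k+1}) + \mathcal{T}_1(S^k)(M(v)n^k) + \varepsilon\,\mathcal{T}_1(S^k)(g^k).
\end{equation*}
Assuming the discrete quantities $n^k$, $S^k$, $g^k$, $g^{k+1}$ remain bounded uniformly in $\varepsilon$, I would let $\varepsilon \to 0$ and retain only the $O(1)$ contributions, which yields the algebraic balance
\begin{equation*}
\mathcal{T}_0(g^{k+1}) = vM\cdot\nabla_x n^k - \mathcal{T}_1(S^k)(M(v)n^k).
\end{equation*}

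Next I would solve this relation for the limiting $g^{k+1}$. Before inverting $\mathcal{T}_0$ one has to verify the solvability (zero-mean) condition on the right-hand side: integrating over $v$, the first term vanishes since $\int_V vM\,dv = 0$ by (\ref{M}), and the second vanishes by the mass-conservation property (\ref{co}). Lemma \ref{LE1}(ii) then provides a unique zero-mean solution, so that
\begin{equation*}
g^{k+1} = \mathcal{T}_0^{-1}(vM\cdot\nabla_x n^k) - \mathcal{T}_0^{-1}(\mathcal{T}_1(S^k)(M(v)n^k)),
\end{equation*}
which is precisely the closure relation (\ref{IN}) evaluated at the time level $k$.

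Finally, I would substitute this limiting expression for $g^{k+1}$ into the discrete macroscopic equation (\ref{eq14b}), obtaining
\begin{equation*}
\frac{n^{k+1}-n^k}{\Delta t} + \langle v\cdot\nabla_x(\mathcal{T}_0^{-1}(vM\cdot\nabla_x n^k))\rangle - \langle v\cdot\nabla_x(\mathcal{T}_0^{-1}(\mathcal{T}_1(S^k)(M(v)n^k)))\rangle = 0,
\end{equation*}
which I recognize as the explicit forward-Euler discretization of the first equation of (\ref{mM2}): the forward difference $(n^{k+1}-n^k)/\Delta t$ approximates $\partial_t n$, while the two flux terms coincide with those of (\ref{mM2}) evaluated at $t_k$. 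Consistency as $\Delta t \to 0$ then follows from a Taylor expansion of the time difference.

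I expect the main obstacle to be the justification of the $\varepsilon \to 0$ passage, namely the claim that the discrete unknowns stay bounded uniformly in $\varepsilon$ so that the $O(\varepsilon)$ and $\varepsilon^2$-weighted terms genuinely drop out. At the formal level this reduces to the solvability condition checked above, which is the one place where the structural assumptions (\ref{M}) and (\ref{co}) and the invertibility of $\mathcal{T}_0$ from Lemma \ref{LE1} are essential.
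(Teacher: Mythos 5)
Your proposal is correct and takes essentially the same route as the paper: the paper solves (\ref{eq14a}) for $g^{k+1}$ by inverting $\left(I-\frac{\Delta t}{\varepsilon^2}\mathcal{T}_0\right)$ (invertible since $-\mathcal{T}_0$ is self-adjoint and nonnegative) and expands in $\varepsilon$ to obtain exactly your closure relation (\ref{eq17}), then substitutes into (\ref{eq14b}) to reach (\ref{eq18}), just as you do. Your explicit verification of the zero-mean solvability condition before applying $\mathcal{T}_0^{-1}$ is a detail the paper leaves implicit (strictly one also needs $\langle g^{k+1}\rangle=0$, which propagates from $\langle g_0\rangle=0$ by integrating the scheme over $v$), but this does not alter the argument.
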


 %%%%%%%%%%%%%%%%%%%%%%%%%%%%%%%%%%%%%%%%%%%%%%%%%%%%%%%%%%%%%%%%%%%%%%%%%%%%%%%%%%%%%%%%%%%%%%%%%%%%%%%%%%%%%%%%%%
 \begin{proof}
     Formally,  (\ref{eq14a}) yields:
    \begin{eqnarray}\nonumber
        \left(I-\frac{\Delta t}{\varepsilon^2}\mathcal{T}_0\right)g^{k+1}  &=& g^k +
         \frac{\Delta t}{\varepsilon^2} \left[\mathcal{T}_1(S^k)\left(M(v)n^k+\varepsilon g^k\right) - vM\cdot \nabla_x n^k \right.\\ \label{eq15}
                        & & \left.- \varepsilon (I-P_M)v\cdot \nabla_x g^k\right].
    \end{eqnarray}

    Since the operator $-\mathcal{T}_0$ is self adjoint and positive defined, also
    $\left(I-\frac{\Delta t}{\varepsilon^2}\mathcal{T}_0\right)$ is self-adjoint and positive
    definite thus invertible for $\Delta t >0$. Therefore one has
    \begin{eqnarray}
        % \nonumber to remove numbering (before each equation)
        \nonumber
        g^{k+1}  &=& \left(I-\frac{\Delta t}{\varepsilon^2}\mathcal{T}_0\right)^{-1} \left( g^k +
        \frac{\Delta t}{\varepsilon^2} \left[\mathcal{T}_1(S^k)\left(M(v)n^k+\varepsilon g^k\right) - vM\cdot \nabla_x n^k \right. \right.\\\label{eq16}
        & & \left. \left.- \varepsilon (I-P_M)v\cdot \nabla_x g^k\right] \right).
    \end{eqnarray}
    Developing the right hand side of (\ref{eq16}) with regard to $\varepsilon$ when $\varepsilon \longrightarrow 0$, yields:
    \begin{equation}\label{eq17}
        g^{k+1} = \mathcal{T}_0^{-1}\left[vM\cdot \nabla_x n^k\right] - \mathcal{T}_0^{-1}\left[\mathcal{T}_1(S^k)(M(v)n^k)\right] + O(\varepsilon).
    \end{equation}
    Substituting $g^{k+1}$ into (\ref{eq14b}) leads to
    \begin{equation}\label{eq18}\resizebox{.9\hsize}{!}{$
        \frac{n^{k+1}-n^k}{\Delta t} +
        \langle v \cdot \nabla_{x} \mathcal{T}_0^{-1}\left[vM\cdot \nabla_x n^k\right] \rangle -
         \langle v \cdot \nabla_{x} \mathcal{T}_0^{-1}\left[\mathcal{T}_1(S^k)(M(v)n^k)\right] \rangle= O(\varepsilon),$}
    \end{equation}
    which is consistent with the first equation of system (\ref{mM2}) when $\varepsilon \longrightarrow 0$.
 \end{proof}
%%%%%%%%%%%%%%%%%%%%%%%%%%%%%%%%%%%%%%%%%%

\subsection{Space and velocity dicretization}

In the following, we present the methods in the case of 1-dimensional space and velocity discretization. The phase-space
interval is denoted by $[x_{min},x_{max}]\times [v_{min},v_{max}]$, where $-v_{min}=v_{max}>0$.

Focusing on the  \textit{spatial discretization}, a finite difference method based on
control volume approach and cell averaging is used. The numerical grid is defined by:
$$
 \displaystyle{ R_{\Delta x} = \{x_i,x_{i+\frac{1}{2}}, \quad 0\leq i \leq N_x=\frac{x_{max}-x_{min}}{\Delta x} \},}
$$
where $\Delta x >0$ is the spatial mesh size, $x_0 = x_{min}$, $x_i = x_{i-1} + \Delta x$ $(1\leq i \leq N_x)$ and
$x_{i+\frac{1}{2}} = (x_{i+1}+x_i)/2$ $(0\leq i \leq N_x-1)$ are the cell center points. Proceeding as in \cite{Benn09}, the
microscopic equation (\ref{eq14a}) is discretized at points $x_{i+\frac{1}{2}}$ while  the macroscopic equation (\ref{eq14b})
and the diffusion equation  (\ref{eq14c}) are discretized at points $x_i$. The approximation of $n(t,x)$, $g(t,x,v)$ and
$S(t,x)$ at the considered spatial points and at the time step $t_k$ are denoted by $n^k_{i}\approx n(t_k,x_i)$,
$g^k_{i+\frac{1}{2}} (v) \approx g(t_k,x_{i+\frac{1}{2}},v)$ and $S^k_{i}\approx S(t_k,x_i)$ respectively.

Setting $v^+=\max(0,v)$, $v^-=\min(0,v)$, the following spatially discrete forms are obtained:
\begin{eqnarray}\nonumber
    \frac{g^{k+1}_{i+\frac{1}{2}}-g^k_{i+\frac{1}{2}}}{\Delta t}  + \frac{1}{\varepsilon}(I-P_{M})\left(v^+
         \frac{g^k_{i+\frac{1}{2}}-g^k_{i-\frac{1}{2}}}{\Delta x} + v^-
         \frac{g^k_{i+\frac{3}{2}}-g^k_{i+\frac{1}{2}}}{\Delta x}\right)\\
\nonumber
    = \frac{1}{\varepsilon^2}\left(\mathcal{T}_{0}(g^{k+1}_{i+\frac{1}{2}})+ \mathcal{T}_1(S^k_{i+\frac{1}{2}})(M(v)n^k_{i+\frac{1}{2}})
          - v M \cdot \frac{n^k_{i+1}-n^k_i}{\Delta x}\right)\\
 \label{eq19}
    + \frac{1}{\varepsilon}\mathcal{T}_1(S^k_{i+\frac{1}{2}})(g^k_{i+\frac{1}{2}}),\qquad\qquad\qquad\qquad\qquad\qquad\qquad\qquad\;\;\;\;\;\\
 \label{eq20}
        \frac{n^{k+1}_{i}-n^k_i}{\Delta t} +  \left\langle v \frac{g^{k+1}_{i+\frac{1}{2}}-g^{k+1}_{i-\frac{1}{2}}}{\Delta x} \right \rangle =
        0,\qquad\qquad\qquad\qquad\qquad
\end{eqnarray}
 and
 \begin{equation}\label{eq21}
        \frac{S^{k+1}_i-S^k_i}{\Delta t}-D_{S^k_i}\frac{S^{k+1}_{i-1}-2S^{k+1}_i+S^{k+1}_{i+1}}{(\Delta x)^2} = H(n^{k+1}_i,S^{k+1}_i).
 \end{equation}

 \begin{Proposition}\label{propN2}
    From the discretization (\ref{eq19})-(\ref{eq21}) of (\ref{ki}), yields  the following numerical scheme when  $\varepsilon \rightarrow 0$:
    \begin{eqnarray}\nonumber
        \frac{n^{k+1}_{i}-n^k_i}{\Delta t}& +&  \frac{1}{\Delta x}\left\langle v \left[ \mathcal{T}^{-1}_{0}\left( v M \cdot \frac{n^k_{i+1}-n^k_i}{\Delta x}\right) \right.\right.\\ \nonumber
        &-& \left.\left. \mathcal{T}^{-1}_{0}\left( v M \cdot \frac{n^k_{i}-n^k_{i-1}}{\Delta x}\right)\right] \right\rangle \\ \nonumber
        &-&\frac{1}{\Delta x}\left\langle v \left[ \mathcal{T}^{-1}_{0}\left( \mathcal{T}_1(S^k_{i+\frac{1}{2}})(M(v)n^k_{i+\frac{1}{2}})\right)
        \right.\right.\\ \label{eq23a}
        &-& \left.\left. \mathcal{T}^{-1}_{0}\left( \mathcal{T}_1(S^k_{i-1/2})(M(v)n^k_{i-1/2})\right)\right] \right\rangle=0
 \end{eqnarray}
    which is consistent with the first equation of system (\ref{mM2}). Moreover, the approximation of the diffusion term is second order accurate in space.
 \end{Proposition}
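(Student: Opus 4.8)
The plan is to repeat, at the fully space--time discrete level, the Hilbert-type expansion used in the proof of Proposition \ref{propN1}, and then simply to read off the resulting stencil from the discrete mass balance. First I would rearrange the discrete microscopic equation (\ref{eq19}) so that the stiff operator $\mathcal{T}_0$ acts alone on $g^{k+1}_{i+\frac{1}{2}}$. Multiplying (\ref{eq19}) through by $\varepsilon^2$ sends the discrete time derivative to $O(\varepsilon^2)$ and both the upwinded transport term (the $v^+,v^-$ fluxes) and the term $\mathcal{T}_1(S^k_{i+\frac12})(g^k_{i+\frac12})$ to $O(\varepsilon)$, so that in the limit $\varepsilon\to0$ only the leading order balance
\begin{equation*}
\mathcal{T}_0(g^{k+1}_{i+\frac12}) = vM\cdot\frac{n^k_{i+1}-n^k_i}{\Delta x} - \mathcal{T}_1(S^k_{i+\frac12})(M(v)n^k_{i+\frac12}) + O(\varepsilon)
\end{equation*}
survives, exactly mirroring (\ref{eq17}).

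Next I would invoke the solvability theory of Lemma \ref{LE1}. The right-hand side above has zero velocity average, since $\langle vM\rangle=0$ by (\ref{M}) and $\langle\mathcal{T}_1(S)(M(v)n)\rangle=0$ by the conservation property (\ref{co}); hence Lemma \ref{LE1}(ii) applies and $\mathcal{T}_0$ may be inverted, giving
\begin{equation*}
g^{k+1}_{i+\frac12} = \mathcal{T}_0^{-1}\!\Big(vM\cdot\tfrac{n^k_{i+1}-n^k_i}{\Delta x}\Big) - \mathcal{T}_0^{-1}\!\Big(\mathcal{T}_1(S^k_{i+\frac12})(M(v)n^k_{i+\frac12})\Big) + O(\varepsilon),
\end{equation*}
together with the analogous identity at the face $x_{i-\frac12}$ (with $n^k_i-n^k_{i-1}$ and the interface values $S^k_{i-1/2},n^k_{i-1/2}$). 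Substituting both face values into the discrete mass equation (\ref{eq20}) and writing out $\langle v(g^{k+1}_{i+\frac12}-g^{k+1}_{i-\frac12})\rangle/\Delta x$ reproduces (\ref{eq23a}) term by term. Because the two flux brackets are finite differences, across adjacent faces, of $\mathcal{T}_0^{-1}(vM\cdot\nabla_x n)$ and of $\mathcal{T}_0^{-1}(\mathcal{T}_1(S)(M(v)n))$, the scheme (\ref{eq23a}) is a consistent discretization of the first equation of (\ref{mM2}).

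For the second-order claim I would isolate the diffusion contribution. By linearity of $\mathcal{T}_0^{-1}$ and part iii) of Lemma \ref{LE1}, namely $\mathcal{T}_0^{-1}(vM)=\theta(v)$, the first bracket of (\ref{eq23a}) collapses to
\begin{equation*}
\frac{1}{\Delta x}\,\big\langle v\,\theta(v)\big\rangle\Big(\frac{n^k_{i+1}-n^k_i}{\Delta x}-\frac{n^k_i-n^k_{i-1}}{\Delta x}\Big)= -D_n\,\frac{n^k_{i+1}-2n^k_i+n^k_{i-1}}{(\Delta x)^2},
\end{equation*}
since $\langle v\otimes\theta\rangle=-D_n$. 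This is the classical centred three-point stencil for $-D_n\,\partial_{xx}n$, whose truncation error is $O((\Delta x)^2)$, so the diffusion term is second order accurate in space.

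The main obstacle is bookkeeping on the staggered mesh rather than analysis: one must verify that the first-order upwind fluxes $v^\pm$ in (\ref{eq19}) enter the expansion only through the $O(\varepsilon)$ remainder, so that they do not contaminate the leading-order diffusion stencil and the centred, second-order structure indeed survives the limit. A secondary point is to confirm that the interface reconstructions $n^k_{i+\frac12}$ and $S^k_{i+\frac12}$ in the $\mathcal{T}_1$ flux are consistent approximations of $n$ and $S$ at the cell faces, which ensures that the chemotactic (first-order) flux in (\ref{eq23a}) is consistent with the drift term $\langle v\cdot\nabla_x(\mathcal{T}_0^{-1}(\mathcal{T}_1(S)(M(v)n)))\rangle$ of (\ref{mM2}).
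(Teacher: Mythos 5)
Your proposal is correct and follows essentially the same route as the paper's proof: extract the leading-order balance from the discrete micro equation (\ref{eq19}) as $\varepsilon\to 0$ (the paper does this by inverting $\bigl(I-\frac{\Delta t}{\varepsilon^2}\mathcal{T}_0\bigr)$, which is justified since $-\mathcal{T}_0$ is self-adjoint and positive, while you multiply by $\varepsilon^2$ and invoke Lemma \ref{LE1}(ii) — formally equivalent), obtain (\ref{eq22}) at both faces, and substitute into the discrete mass equation (\ref{eq20}) to get (\ref{eq23a}). Your explicit verification of the solvability condition $\langle vM\rangle=0$, $\langle\mathcal{T}_1(S)(Mn)\rangle=0$ and your reduction of the diffusion bracket to the centred stencil $-D_n(n^k_{i+1}-2n^k_i+n^k_{i-1})/(\Delta x)^2$ via $\langle v\,\theta(v)\rangle=-D_n$ actually spell out details the paper only asserts ("proceeding as in the continuous case").
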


 \begin{proof}
 The quantity $g^k_{i+\frac{1}{2}}$ is derived from (\ref{eq19}) as:
 \begin{eqnarray}\nonumber
    g^{k+1}_{i+\frac{1}{2}} &=& \left(I-\frac{\Delta t}{\varepsilon^2}\mathcal{T}_{0}\right)^{-1}
        \left[g^k_{i+\frac{1}{2}}\right.\\ \nonumber
        && - \frac{\Delta t}{\varepsilon} (I-P_{M})\left(v^+
         \frac{g^k_{i+\frac{1}{2}}-g^k_{i-\frac{1}{2}}}{\Delta x} + v^-
         \frac{g^k_{i+\frac{3}{2}}-g^k_{i+\frac{1}{2}}}{\Delta x}\right)\\ \nonumber
         && + \frac{\Delta t}{\varepsilon^2}\bigg(\varepsilon \mathcal{T}_1(S^k_{i+\frac{1}{2}})(g^k_{i+\frac{1}{2}}) +
    \mathcal{T}_1(S^k_{i+\frac{1}{2}})(M(v)n^k_{i+\frac{1}{2}})   \left. - v M \cdot \frac{n^k_{i+1}-n^k_i}{\Delta
    x}\bigg)\right].
 \end{eqnarray}
 It follows that:
 \begin{eqnarray}\label{eq22}
    g^{k+1}_{i+\frac{1}{2}} &=& \mathcal{T}^{-1}_{0}\left[ v M \cdot \frac{n^k_{i+1}-n^k_i}{\Delta x} -
     \mathcal{T}_1(S^k_{i+\frac{1}{2}})(M(v)n^k_{i+\frac{1}{2}}) \right] +O(\varepsilon)
 \end{eqnarray}
as $\varepsilon \longrightarrow 0$. Replacing $g^{k+1}_{i+\frac{1}{2}}$ in (\ref{eq20}) by its expression of (\ref{eq22}) and passing to the limit yields the relation (\ref{eq23a}). Proceeding as in the continuous case, the spatially discrete form (\ref{eq23a}) is  consistent  with the first equation of system (\ref{mM2}).
\end{proof}

Let us now focus on the \textit{velocity discretization} and consider a uniform velocity grid defined as:
$
   \displaystyle{ V_{\Delta v} = \{v_j =v_{min} +j\Delta v,\; 0\leq j \leq N_v \},}
$
where $\Delta v = \frac{v_{max}-v_{min}}{N_v}$ is the velocity step and $N_v\in \mathbb{N}^*$ is and odd number. The
approximation of $g(t,x,v)$  at the spatial points $x_{i+\frac{1}{2}}$ and velocity $v_j$ at  time step $t_k$ is
denoted by $g^k_{i+\frac{1}{2},j} \approx g(t_k,x_{i+\frac{1}{2}},v_j)$.   The velocity discretization is achieved by substituting $v$ by $v_j$ and $g^k_{i+\frac{1}{2}}(v_j)$ by $g^k_{i+\frac{1}{2},j}$ in Equations (\ref{eq19})-(\ref{eq20}), and numerically approximate integrals therein. The bracket $\langle . \rangle$, the projection $P_{M_j}$ and the integral operators $\mathcal{T}_{0,j}$ and $\mathcal{T}_{1,j}(S_{i+\frac{1}{2}}^k)$, are approximated using the trapezoidal rule. % \cite{[FY13]}.
%%%%%%%%%%%%%%%%%%%%%%%%%%%%%%%%%%%%%%%%%%%%%%%%%%%%%%%%%%%%%%%%%%%%%%%%%%%%%%%%%%%%%%%%%%%%%%%%%%%%%%%%%%%%%%%

 The numerical study  of  (\ref{ki}) needs \textit{boundary conditions}.  The following inflow boundary conditions are usually applied to $f$:
 \begin{equation}\label{eq32}
    f(t,x_{min},v) = f_{l}(v),\; v>0 \quad \text{and} \quad   f(t,x_{max},v) = f_{r}(v),\; v<0,
 \end{equation}
  which can be rewritten in the micro-macro formulation:
 \begin{equation}\label{eq33}\resizebox{.9\hsize}{!}{$
    \begin{array}{ll}
        n(t,x_0)M_j+\frac{\varepsilon}{2}\left(g(t,x_{\frac{1}{2}},v_j) + g(t,x_{-\frac{1}{2}},v_j)\right)=f_{l}(v_j),& v_j>0,\\
        n(t,x_{N_x})M_j+\frac{\varepsilon}{2}\left(g(t,x_{N_x+\frac{1}{2}},v_j) + g(t,x_{N_x-\frac{1}{2}},v_j)\right)=f_{r}(v_j),& v_j<0,
    \end{array}$}
 \end{equation}
 while, the following artificial Neumann boundary conditions are imposed for the other velocities \cite{L6}:
 \begin{equation}\label{eq34}
    \begin{array}{ll}
        g(t,x_{-\frac{1}{2}},v_j) = g(t,x_{\frac{1}{2}},v_j),& v_j<0,\\{}\\
        g(t,x_{N_x+\frac{1}{2}},v_j) = g(t,x_{N_x-\frac{1}{2}},v_j),& v_j>0.
    \end{array}
 \end{equation}
 Therefore, the "ghost" points can be computed as follows:
 \begin{equation}\label{eq35}\resizebox{.9\hsize}{!}{$
    \left\{
        \begin{array}{l}
          g^{k+1}_{-\frac{1}{2},j} =\frac{2}{\varepsilon}\left(f_l(v_j)-n_0^{k+1}M_j\right)-g^{k+1}_{\frac{1}{2},j},\;\; g^{k+1}_{N_x+\frac{1}{2},j} =g^{k+1}_{N_x-\frac{1}{2},j}, \;\; v_j>0, \\{}\\
          g^{k+1}_{N_x+\frac{1}{2},j} = \frac{2}{\varepsilon}\left(f_r(v_j)-n_{Nx}^{k+1}M_j\right)-g^{k+1}_{N_x-\frac{1}{2},j},\;\;
            g^{k+1}_{-\frac{1}{2},j} =g^{k+1}_{\frac{1}{2},j}, \;\; v_j<0.
        \end{array}\right.$}
 \end{equation}
 Then it follows from (\ref{eq20}) that:
 \begin{equation}\label{eq36}\resizebox{.9\hsize}{!}{$
    \left\{
    \begin{array}{l}
      \left(1+\frac{2\Delta t}{\varepsilon \Delta x}\langle v^+_j M_j\rangle\right) n^{k+1}_{0} =n^k_0 -
    \frac{\Delta t}{\Delta x}\left\langle (v_j +|v_j|)g^{k+1}_{\frac{1}{2},j}
    -\frac{2v_j^+}{\varepsilon}f_l(v_j)\right\rangle,\\
      \left(1-\frac{2\Delta t}{\varepsilon \Delta x}\langle v^-_j M_j\rangle\right) n^{k+1}_{N_x} =
    n^k_{N_x} - \frac{\Delta t}{\Delta x}\left\langle\frac{2v_j^-}{\varepsilon}f_r(v_j) \right.
     - \left.(v_j-|v_j|) g^{k+1}_{N_x-\frac{1}{2},j}\right\rangle.
    \end{array}\right.$}
 \end{equation}

 In addition, the homogeneous Neumann boundary conditions are prescribed for the concentration $S$:
    $S^{k+1}_{-1}= S^{k+1}_1$  and $S^{k+1}_{N_x+1}= S^{k+1}_{N_x-1}$.

 Focusing now on the \textit{implementation of the method}, the following algorithm can be used for the numerical solution of the Micro-Macro system (\ref{mM}):
 Given $g^0$, $g^0_{-\frac{1}{2}}$, $g^0_{N_x+\frac{1}{2}}$, $S^0$, $n^0_{in}$, $n^0_0$, $n^0_{N_x}$.\\
 For $k=1,2,\cdots,N$
    \begin{enumerate}
        \item Solve $g^{k+1}_{i+\frac{1}{2}}$, ($i=0,1,\cdots, N_x-1$) from (\ref{eq19});
        \item Compute $n^{k+1}_{i}$ ($i=1,2,\cdots, N_x-1$)using (\ref{eq20});
        \item Compute $n^{k+1}_0$ and $n^{k+1}_{N_x}$ using (\ref{eq36});
        \item Compute $g^{k+1}_{-\frac{1}{2}}$ and $g^{k+1}_{N_x+\frac{1}{2}}$ using (\ref{eq35});
        \item Solve $S^{k+1}$ from (\ref{eq21}).
    \end{enumerate}
%%%%%%%%%%%%%%%%%%%%%%%%%%%%%%%%%%%%%%%%%%%%%%%%%%%%%%%%%%%%%%%%%%%%%%%%%%%%%%%%%%%%%%%%%%%%%%%%%%%%%%%%%%%%%%%%%%%

\subsection {A time implicit discretization}
The previous discretization  is explicit for the macro part $n$. It then imposes the diffusion restriction on the time step $\Delta t= O((\Delta x)^2)$. To overcome this restriction, a time implicit discretization can be applied for the macro part such that at the limit, the diffusion term is treated implicitly. Following the idea in \cite{L6}, a time implicit scheme can be derived for the macro part of the micro-macro system. It consists to substitute $g^{k+1}_{i\pm\frac{1}{2}}$ in (\ref{eq20}) by $\tilde{g}^{k+1}_{i\pm\frac{1}{2}}$ defined as follows:
\begin{eqnarray}\nonumber
    \tilde{g}^{k+1}_{i+\frac{1}{2}} &=&  -\frac{\Delta t}{\varepsilon^2}\mathcal{H}_{0\varepsilon}^{-1}\left(v M \cdot \partial_x n^{k+1}_{i+\frac{1}{2}}\right) +\mathcal{H}_{0\varepsilon}^{-1}
        \bigg[g^k_{i+\frac{1}{2}}\\ \nonumber
        && - \frac{\Delta t}{\varepsilon} (I-P_{M})\left(v^+
         \frac{g^k_{i+\frac{1}{2}}-g^k_{i-\frac{1}{2}}}{\Delta x} + v^-
         \frac{g^k_{i+\frac{3}{2}}-g^k_{i+\frac{1}{2}}}{\Delta x}\right)\\ \label{im1}
         && + \frac{\Delta t}{\varepsilon^2}\left(\varepsilon \mathcal{T}_1(S^k_{i+\frac{1}{2}})(g^k_{i+\frac{1}{2}}) +
    \mathcal{T}_1(S^k_{i+\frac{1}{2}})(M(v)n^k_{i+\frac{1}{2}})\right) \bigg],
\end{eqnarray}
where
$\mathcal{H}_{0\varepsilon}=\left(I-\frac{\Delta t}{\varepsilon^2}\mathcal{T}_{0}\right)$ and $\partial_x n^{k+1}_{i+\frac{1}{2}}=\frac{n^{k+1}_{i+1}-n^{k+1}_i}{\Delta x}.$

Therefore, the following implicit time discretization of the macro part is obtained:
\begin{eqnarray}\label{im2}\resizebox{.95\hsize}{!}{$
        \frac{n^{k+1}_{i}-n^k_i}{\Delta t} -\bigg\langle \frac{\Delta t}{\varepsilon^2}v\mathcal{H}_{0\varepsilon}^{-1}\left(v M \frac{\partial_x n^{k+1}_{i+\frac{1}{2}}-\partial_x n^{k+1}_{i-\frac{1}{2}}}{\Delta x}\right)\bigg\rangle %\\
        +  \left\langle v \frac{\hat{g}^{k+1}_{i+\frac{1}{2}}-\hat{g}^{k+1}_{i-\frac{1}{2}}}{\Delta x} \right \rangle =
        0,$}
 \end{eqnarray}
 where
 $$\hat{g}^{k+1}_{i+\frac{1}{2}} = {g}^{k+1}_{i+\frac{1}{2}} + \frac{\Delta t}{\varepsilon^2}\mathcal{H}_{0\varepsilon}^{-1}\left(v M \cdot \partial_x n^k_{i+\frac{1}{2}}\right).$$
 It can be seen that for small $\varepsilon$,
 $$\frac{\Delta t}{\varepsilon^2}\mathcal{H}_{0\varepsilon}^{-1}\left(v M \cdot \partial_x n^{k+1}_{i+\frac{1}{2}}\right) = -\mathcal{T}_{0}^{-1}\left(v M \cdot \partial_x n^{k+1}_{i+\frac{1}{2}}\right) + O(\varepsilon)$$
 and
 $$\hat{g}^{k+1}_{i+\frac{1}{2}}=\mathcal{T}_{0}^{-1}\left(\mathcal{T}_1(S^k)(M(v)n^k_{i+\frac{1}{2}})\right).$$
 Trough substitution and using the properties of $\mathcal{T}_{0}$, we obtain the following discret form of the macro part as $\varepsilon \longrightarrow 0$
 \begin{equation}\label{im3}\resizebox{.9\hsize}{!}{$
        \frac{n^{k+1}_{i}-n^k_i}{\Delta t} - D_n\frac{ n^{k+1}_{i+1}-2n^{k+1}_{i}+n^{k+1}_{i+1}}{(\Delta x)^2} %\\
        +  \frac{\alpha(S_{i+\frac{1}{2}})n^k_{i+\frac{1}{2}}-\alpha(S_{i-\frac{1}{2}})n^k_{i-\frac{1}{2}}}{\Delta x} =
        0,$}
 \end{equation}
which is consistent with a discrete form  of the macroscopic limit, obtained by using an implicit dicretization of the diffusion term. We remark that there is an additional computation of  $\mathcal{H}_{0\varepsilon}^{-1}$ for the calculation of $n^{k+1}$. In the particular case, where $$\mathcal{T}_{0}(f)=-\sigma( f-\langle f \rangle M(v)),$$ we have from the micro-macro decomposition $\mathcal{T}_{0}({g}^{k+1}_{i+\frac{1}{2}})=-\sigma {g}^{k+1}_{i+\frac{1}{2}}$. Hence

\begin{equation}\label{im4}\resizebox{.99\hsize}{!}{$
        \frac{n^{k+1}_{i}-n^k_i}{\Delta t} - \frac{\Delta t}{\varepsilon^2+\sigma \Delta t}\bigg\langle \left(v^2 M \frac{\partial_x n^{k+1}_{i+\frac{1}{2}}-\partial_x n^{k+1}_{i-\frac{1}{2}}}{\Delta x}\right)\bigg\rangle
        +  \left\langle v \frac{\hat{g}^{k+1}_{i+\frac{1}{2}}-\hat{g}^{k+1}_{i-\frac{1}{2}}}{\Delta x} \right \rangle =
        0,$}
 \end{equation}
 while $n^{k+1}$ is obtained by solving the linear system
 \begin{equation}
    (An^{k+1})_i = n^k_i - \Delta t\left\langle v \frac{\hat{g}^{k+1}_{i+\frac{1}{2}}-\hat{g}^{k+1}_{i-\frac{1}{2}}}{\Delta x} \right \rangle,
 \end{equation}
where $A$ is the tridiagonal matrix $A = Tridiag(-\gamma,2+\gamma,-\gamma)$ with
$$\gamma = \langle v^2M\rangle\frac{(\Delta x)^2}{(\Delta t)^2(\varepsilon^2+\Delta t)}\quad \text{and} \quad
\hat{g}^{k+1}_{i+\frac{1}{2}} = g^{k+1}_{i+\frac{1}{2}}+\frac{\Delta t}{\varepsilon^2+\sigma \Delta t}\left(v M \partial_x n^{k}_{i+\frac{1}{2}}\right).$$
The boundary conditions are incorporated using (\ref{eq36}).

%%%%%%%%%%%%%%%%%%%%%%%%%%%%%%%%%%%%%%%%
%%%%%%%%%%%%%%%%%%%%%%%%%%%%%%%%%%%%%%%%
\section{Numerical results}
%%%%%%%%%%%%%%%%%%%%%%%%%%%%%%%%%%%%%%%%
%%%%%%%%%%%%%%%%%%%%%%%%%%%%%%%%%%%%%%%%

We present, in this section, some numerical experiments to validate our
approach. In our tests, the space domain is the interval
 $X= [-1; 1]$, the velocity domain is $V = [ -1; 1]$, while for all  numerical tests, the velocity space is divided into $N_v=64$, which can provide good enough accuracy for numerical simulations \cite{Carr11}.  The equilibrium distribution $M(v)$ and the kernels $T_0(v,v')$ and $T_1(S,v,v')$ are chosen as follows:
  $$\displaystyle{ M(v)=\frac{1}{2}}, \quad \displaystyle{ T_0(v,v') = M(v)},\quad \displaystyle{T_1(S,v,v')=(v.\nabla S)_+},$$
 so that $T_0$ and $T_1$ satisfy assumptions (\ref{co})-(\ref{T}).

 Boundary conditions are given by 
 $$f(t,-1,v) = 0, \forall v<0; f(t,1,v) = 0, \forall v > 0.$$
  For the chemoattractant equation, we consider $H(n,S) = -S+n$ and the initial condition $S(0,x)=0$. We consider the following non-equilibrium initial cell distribution function :
$$f(0,x,v) =  80\exp(-80x^2)\exp(v/100)M(v).$$

Our scheme is compared with:
\begin{itemize}
    \item an explicit-Euler scheme applied to the kinetic equation in the kinetic regime;
    \item an explicit finite difference method for the corresponding Keller-Segel system equation in the macroscopic regime \cite{Saito09};
    \item an asymptotic preserving scheme obtained from a time splitting method applied to the Odd-Even decomposition of the kinetic equation \cite{Carr11,JP}, in both kinetic and macroscopic regime.
\end{itemize}

\noindent \vskip.2cm {\bf Numerical tests:}  In the following, we denote by:
\begin{itemize}
    \item MM: the scheme obtained from the micro-macro decomposition,

    \item K-S: the scheme for the keller-Segel system,

    \item Explicit: the explicit scheme for the kinetic equation,

    \item Odd-Even: the odd-even parity asymptotic preserving scheme.
\end{itemize}
We have observed that the use of the time implicit discretization for the micro-macro model and the Keller-Segel limit give rise to numerical results which are very close to those produced by the explicit discretization. Hence, for the numerical results presented, we use the implicit approach for the MM and K-S schemes. The first test concerns the convergence order of the MM scheme computed at time $t$ using the $l^2$ as follows:
$$\displaystyle{e_{\Delta x}(f) = \frac{\|f_{\Delta x}(t)-f_{2\Delta x}(t)\|_2}{\|f_{{2\Delta x}}(0)\|_2},}$$
where $f_{\Delta x}$ denotes the approximation of $f$ using the spatial grid size $\Delta x$. The time step is set to $\Delta t = \frac{(\Delta x)^2}{2}$. Figure~\ref{fig1a} presents the convergence rates obtained with $N_x=80,\,160,\,320, \, 640$ at time $t=0.1$ for $\varepsilon \in \{1,\,0.01,\,10^{-4}, \,10^{-6}\}$. It can be seen  that the MM scheme converges uniformly since time step does not depend on $\varepsilon$. A second order convergence is observed in the diffusive regime ($\varepsilon\leq 10^{-4}$).
In the following, we set $N_x=200$. The time step is set to $\Delta t = \frac{\varepsilon\Delta x}{2}$ at the kinetic regime and $\Delta t = O(\Delta x)$ at the diffusive regime ($\Delta t=\Delta x/2$ for MM and K-S schemes and $\Delta t=\Delta x/40$ for the Odd-Even method). We illustrate in Figure~\ref{fig2} the behaviour of the MM scheme at different regimes. For different values of $\varepsilon$
($\varepsilon_k=2^{-k}$, $k\geq 0$), we plot at time $t=0.5$ the density of cells. We also add the result obtained with the K-S scheme. It can be seen that the MM scheme is stable as $\varepsilon \rightarrow 0$ and converges to the Keller-Segel limit. Indeed, for $\varepsilon \leq 2^{-7}$, the profiles of the density given by the two
schemes are quite the same.
%%%%%%%%%%%%%%%%%%%%%%%%%%%%%%%%%%%%%%%%%

To check the behaviour of MM scheme in kinetic regime, we compare in Figure~\ref{fig3a} the density of cells obtained for $\varepsilon=1$ with the MM, Explicit  and Odd-Even schemes at time $t=0.5$. As expected, for both schemes, the results are very closed.
%%%%%%%%%%%%%%%%%%%%%%%%%%%%%%%%%%%%%%%%%%%%%%%%%%%%%%%%%%%%%%%%%%%%%%%%%%%%%%%%%%%%%%%%%%%%%%%%%%%%%%%%%%%%%%%%
\begin{figure}[htp]
 \centering
  \includegraphics[width=0.8\textwidth]{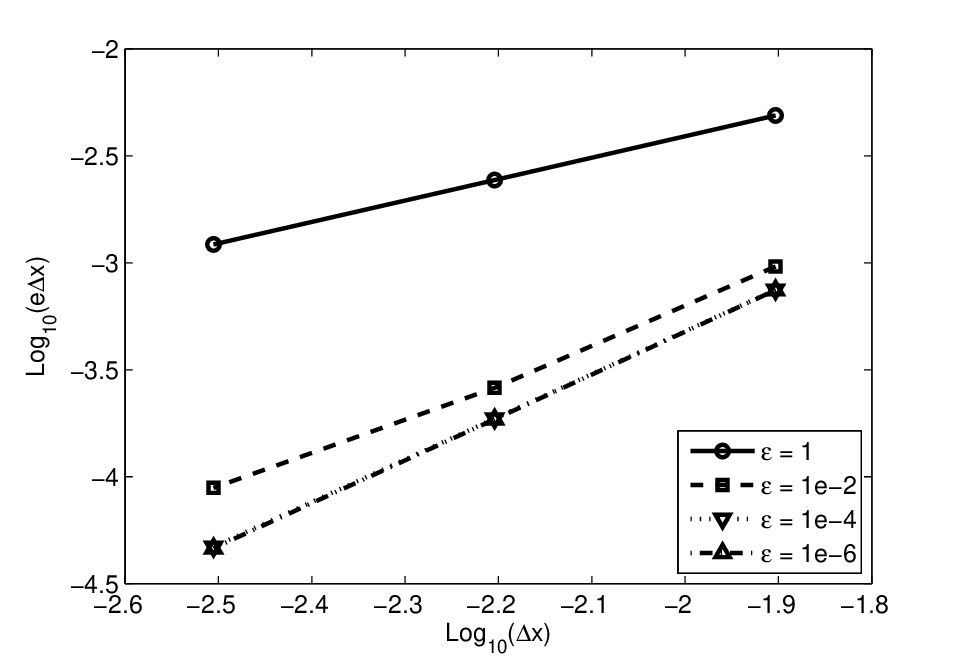}
 \caption[Optional caption for list of figures]{%
  Convergence order of the method for $\varepsilon \in \{1,\,0.01,\,10^{-4}, \,10^{-6}\}$ at time $t=0.1$.}
  \label{fig1a}
\end{figure}
\begin{figure}[htp]
 \centering
 \includegraphics[width=0.8\textwidth]{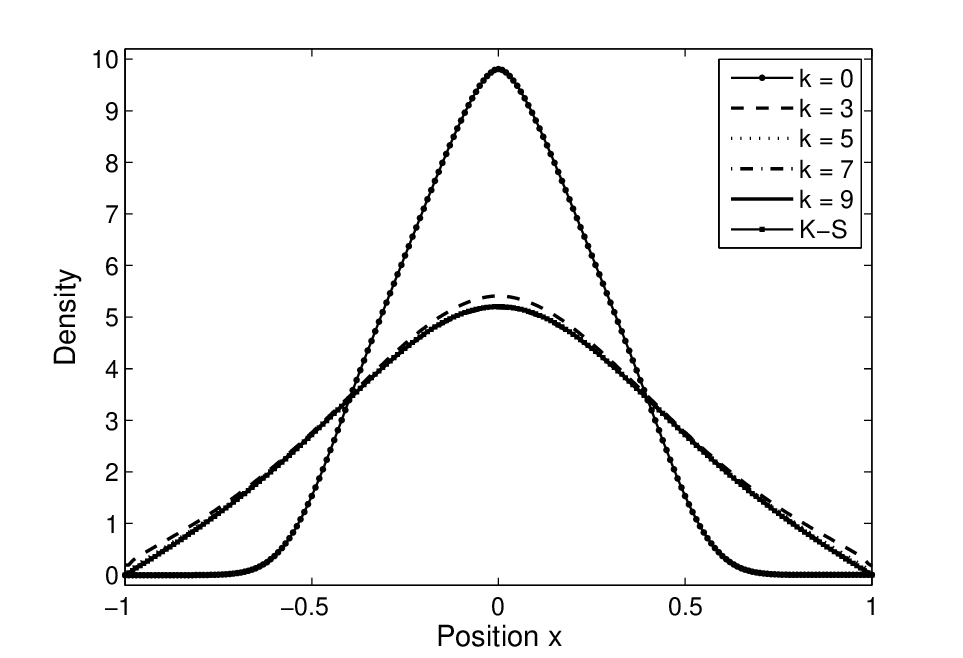}
 \caption[Optional caption for list of figures]{%
  Density of cells at time $t=0.5$ using MM and K-S schemes for $\varepsilon = 2^{-k}$ ($k\in\{0,1,2,5,9\}$).}
  \label{fig2}
\end{figure}
%%%%%%%%%%%%%%%%%%%%%%%%%%%%%%%%%%%%%%
\begin{figure}[htp]
\centering
   \includegraphics[width=0.8\textwidth]{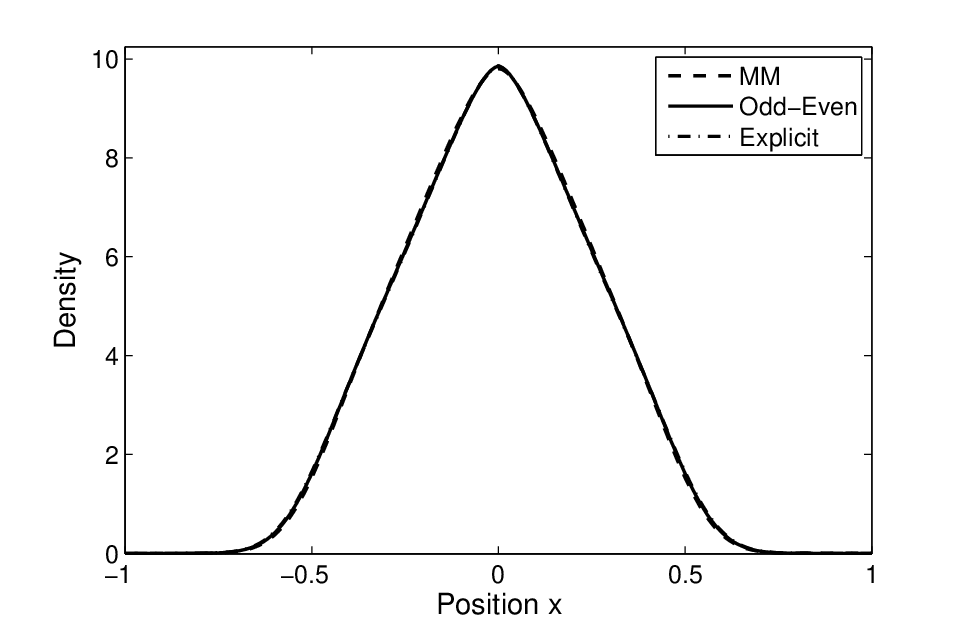}
\caption[0pt]{%
  Density of cells at time $t=0.5$ obtained with MM, Odd-Even and Explicit schemes for $\varepsilon = 1$.}
  \label{fig3a}
\end{figure}
%%%%%%%%%%%%%%%%%%%%%%%%%%%%%%%%%%%%%%
\begin{figure}[htp]
   \centering
  \includegraphics[width=0.8\textwidth]{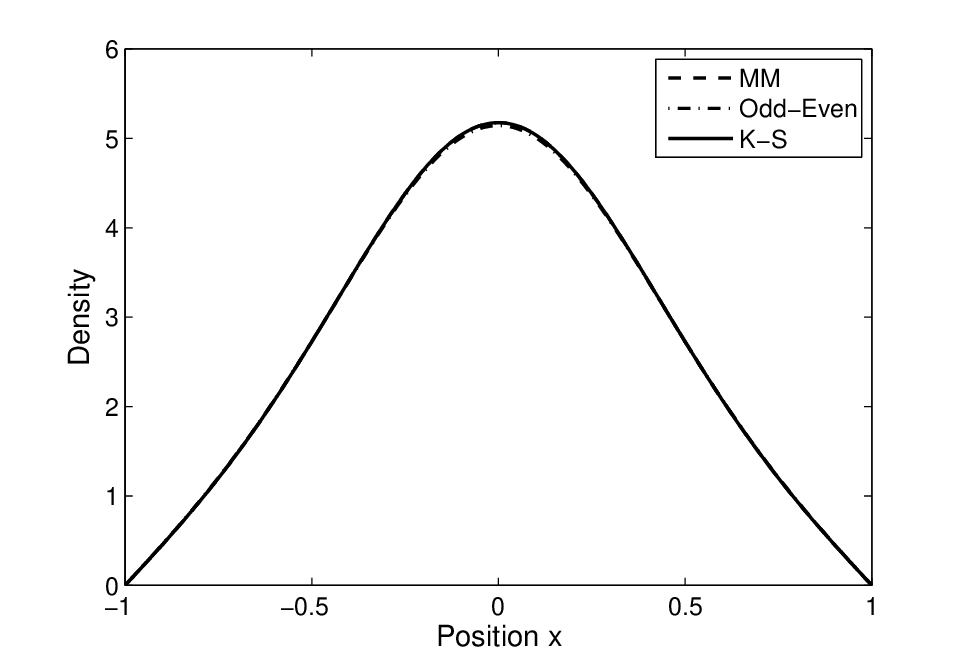}
\caption[0pt]{%
  Density of cells at time $t=0.5$ obtained with MM, Odd-Even and Explicit schemes for $\varepsilon = 10^{-6}$.}
  \label{fig3b}
\end{figure}
%%%%%%%%%%%%%%%%%%%%%%%%%%%%%%%%%%%%%%%%%%%%%%%%%%%%%%%%%%%%%%%%%%%%%%%
We also illustrate the behaviour of the methods in macroscopic regime. We compare in Figure~\ref{fig3b} the density of cells obtained for $\varepsilon=10^{-6}$ with the MM, Odd-Even and K-S schemes at time $t=0.5$. As expected, for both schemes, the results are quite the same.
%%%%%%%%%%%%%%%%%%%%%%%%%%%%%%%%%%%%%%%%%%

We investigate the time evolution of the density using the MM scheme in different regimes ($\varepsilon = 1,  10^{-6}$). The
results are shown in Figure~\ref{fig5a} and Figure~\ref{fig5b}. In each case, the density seems to evolve to a stationary solution.
%%%%%%%%%%%%%%%%%%%%
%%%%%%%%%%%%%%%%%%%%%%%%%%%%%%%%%%%%%%%%
\begin{figure}[htp]
\centering
   \includegraphics[width=0.8\textwidth]{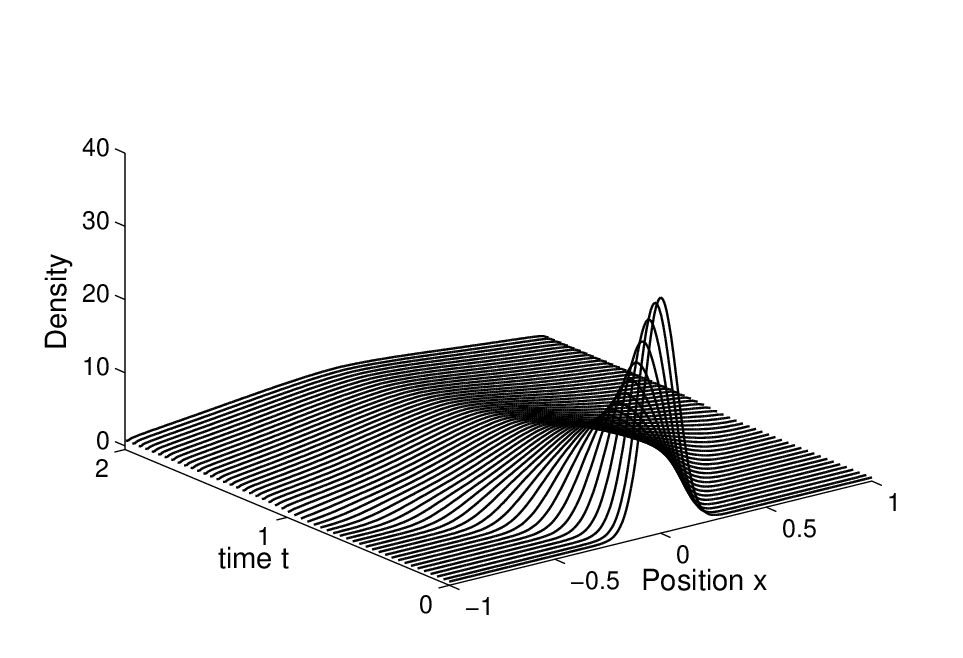}\\ 
   \caption[Optional caption for list of figures]{%
  Evolution of the cell Density using MM scheme for $\varepsilon=1$.}
  \label{fig5a}
\end{figure}
%%%%%%%%%%%%%%%%%%%%%%%%%%%%%%%%%%%%%%
\begin{figure}[htp]
 \centering
   \includegraphics[width=0.8\textwidth]{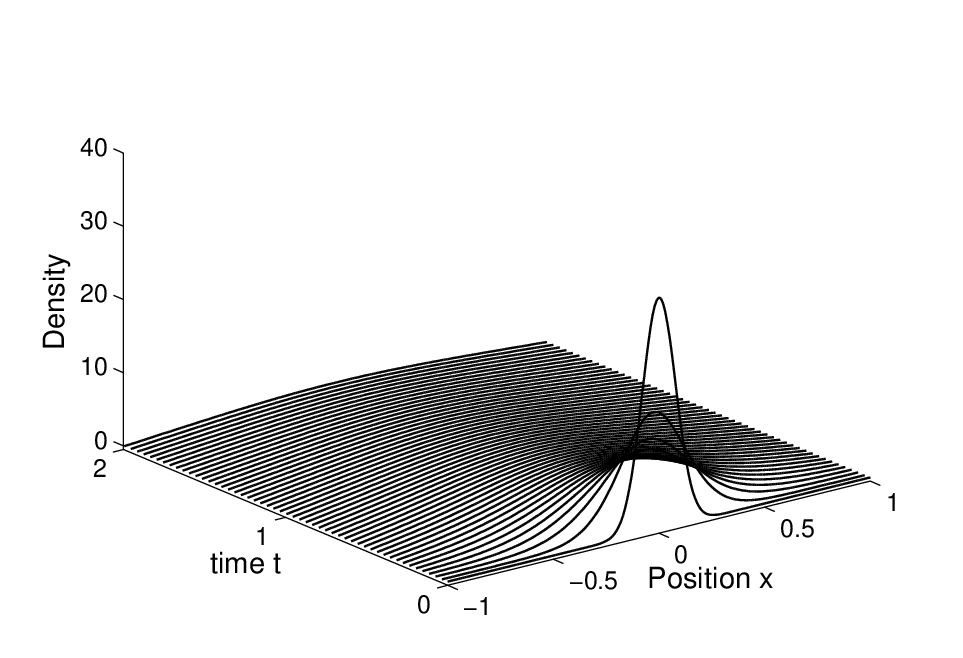}\\ 
\caption[Optional caption for list of figures]{%
  Evolution of the cell Density using MM scheme for $\varepsilon = 10^{-6}$.}
  \label{fig5b}
\end{figure}
%%%%%%%%%%%%%%%%%%%

\section{Closure looking ahead at research perspectives}

This paper has developed a computational approach to a class of pattern formation models derived from the celebrated Keller-Segel model obtained by the
underlying description delivered by generalized kinetic theory methods.  The derivation is based on a decomposition with two scales, namely the microscopic and
the macroscopic one technically related, as we have seen, by suitable small parameters accounting for the time and space dynamics.

The novelty of our paper is that the computational scheme which follows precisely the derivation hallmarks by using the same decomposition and parameters. This idea
improves the stability properties of the solutions with respect to classical approaches known in the literature. However, without repeating concepts already mentioned in the previous sections, we wish to stress that this method can contribute to future developments also related to applications. In fact, the need of
new models in biology is  presented in \cite{[HP1]} and \cite{[BB5]} to account for a broad variety of biological phenomena. Moreover, it is shown in \cite{[BB5]} that the so-called micro-macro decomposition can lead to an interesting variety of models such as models of angiogenesis phenomena.

Therefore, modeling and computational methods can march together thus contribution to a deeper understanding of the specific features of the two different, however, related fields. Indeed, we have in mind not only applications in biology, but also to the dynamics of self-propelled particles such as those of vehicular traffic as it has been recently shown \cite{[BB4]} how macroscopic models can be derived from the kinetic description using precisely the micro-macro decomposition treated in this present paper.
%%%%%%%%%%%%%%%%%%%%%%%%%%%%%%%%%%%%%%%%%%%%%%%%%%%%%%%%%%%%%%%%%%%%%%%%%%%%%%%%%%%%%%%%%%%%%%%%

\section*{Acknowledgments}

 The first author was supported  by Hassan II Academy of Sciences  and Technology (Morocco), project ``M\'ethodes math\'ematiques et
    outils de mod\'elisation et simulation pour le cancer''. Part of this work was done during the visit of the second author
at ENSA Marrakech (Morocco).

\end{document}